\newtheorem{theorem}{Theorem}
\newtheorem{lemma}[theorem]{Lemma}
\newtheorem{corollary}[theorem]{Corollary}
\newcommand{\rar}{\rightarrow}
\begin{document}

\title{Conditional measure and the violation of Van Lambalgen's theorem for Martin-L\"of randomness}

\date{}
\author{Bruno Bauwens
\thanks{
      National Research University Higher School of Economics (HSE),\newline
      Faculty of Computer Science,\newline
      Kochnovskiy Proezd 3, Moscow, 125319 Russia.\newline
      BBauwens\color{white}{x}\color{black}at\color{white}{z}\color{black}hse\color{white}{u}\color{black}dot\color{white}{u}\color{black}ru
    \smallskip
    \newline This research was done on leave from Ghent University to the Higher School of Economics.  
    The work was done partially while the author was visiting the Institute for Mathematical Sciences, 
    National University of Singapore in 2014.  The visit was supported by the Institute.  
    \smallskip
    \newline 
    Hayato Takahashi and I visited Alexander (Sasha) Shen at Universit\'e de Montpellier 2 in France in
    November 2012 (both visits where supported by the project NAFIT  ANR-08-EMER-008-01).  
    I am very grateful for the intense discussion on the proof of
    Theorem~\ref{th:TakahashiVanLambalgen} and for raising the question answered in this note.
    Understanding this proof was important to prove the main result here.  
    Several simplifications of the proof here were made also by Alexander Shen.
    \smallskip
    \newline 
    I thank Jason Rute for useful discussions and the reviewers for quick feedback and useful comments.
    \bigskip
    }
}

\maketitle
\begin{abstract}
  Van Lambalgen's theorem states that a pair $(\alpha,\beta)$ of bit sequences is Martin-L\"of random
  if and only if $\alpha$ is Martin-L\"of random and $\beta$ is Martin-L\"of random
  relative to $\alpha$. 
  In [Information and Computation 209.2 (2011): 183-197, Theorem 3.3], Hayato Takahashi generalized 
  van Lambalgen's theorem for computable measures $P$ on a product of two Cantor spaces; he showed that 
  the equivalence holds for each $\beta$ for which the conditional probability $P(\cdot | \beta)$ is computable. 
  He asked whether this computability condition is necessary.
  We give a positive answer by providing a computable measure for which van Lambalgen's theorem fails.
  We also present a simple construction of a computable measure for which conditional measure is not computable.  
  Such measures were first constructed by N. Ackerman, C. Freer and D. Roy 
  in [Proceedings of the 26th Annual IEEE Symposium on Logic in Computer Science (LICS), pp. 107-116. IEEE (2011)].
\end{abstract}

Michiel van Lambalgen characterized Martin-L\"of randomness of a pair of bit sequences:

\begin{theorem}[van Lambalgen~\cite{VanLambalgen}]\label{th:VanLambalgen}
  The following are equivalent for a pair $(\alpha,\beta)$ of sequences:
  \begin{itemize}
    \item $(\alpha,\beta)$ is Martin-L\"of random,
    \item $\beta$ is Martin-L\"of random and $\alpha$ is Martin-L\"of random relative to $\beta$.
  \end{itemize}
\end{theorem}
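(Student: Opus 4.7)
The plan is to prove the two implications separately, using the Martin-L\"of test characterization of randomness. For the forward direction, both halves are handled by lifting tests to the product. Given a Martin-L\"of test $(V_n)$ for $\beta$, the sets $\{0,1\}^\omega \times V_n$ form a Martin-L\"of test on the product that catches $(\alpha,\beta)$ exactly when $(V_n)$ catches $\beta$; hence $\beta$ must be random. Given a $\beta$-relative Martin-L\"of test $(W_n)$ catching $\alpha$, the sets $U_n = \{(x,y) : x \in W_n^y\}$ are uniformly c.e.\ open in the product space (since the $W_n^y$ are uniformly c.e.\ in $y$), and Fubini yields $\mu(U_n) = \int \mu(W_n^y)\, dy \leq 2^{-n}$. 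If $\alpha \in W_n^\beta$ for all $n$, then $(\alpha,\beta) \in U_n$ for all $n$, contradicting pair-randomness.

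For the backward direction I would slice a putative pair-test using Markov's inequality. Let $(U_n)$ be a Martin-L\"of test on the product that catches $(\alpha,\beta)$. For each $n$ the section $U_n^y := \{x : (x,y) \in U_n\}$ has measure $\mu(U_n^y)$ that is lower semicomputable in $y$ uniformly in $n$: enumerating $U_n$ as basic rectangles $[\sigma] \times [\tau]$, the value at $y$ is the combined measure of those $[\sigma]$ whose partner $\tau$ is a prefix of $y$. Hence $V_n := \{y : \mu(U_n^y) > 2^{-n/2}\}$ is c.e.\ open, and Markov's inequality gives $\mu(V_n) \leq 2^{-n/2}$. After reindexing, $(V_{2n})_n$ is a Martin-L\"of test for $\beta$. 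If $\beta$ is random it escapes $(V_{2n})$ for all large $n$, so $\mu(U_{2n}^\beta) \leq 2^{-n}$ eventually; the family $(U_{2n}^\beta)$ is then uniformly $\beta$-c.e.\ open and catches $\alpha$, contradicting randomness of $\alpha$ relative to $\beta$.

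The hard part will be the backward direction, and specifically the verification that $y \mapsto \mu(U_n^y)$ is uniformly lower semicomputable, so that the sliced family $V_n$ is genuinely c.e.\ open with a uniform index, and the check that $(U_{2n}^\beta)$ is a legitimate $\beta$-Martin-L\"of test (in particular that it can be produced from an oracle for $\beta$). Once these computability details are secured, the threshold $2^{-n/2}$ is bookkeeping: any splitting of the $2^{-n}$ budget into a pair of summable bounds, one passed to the outer $\beta$-test and one passed to the inner $\alpha$-given-$\beta$ test, will do.
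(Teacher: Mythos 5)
The paper itself only cites van Lambalgen's theorem and gives no proof, so I am judging your argument against the standard one. Your backward direction (slicing a product test by Markov's inequality) is essentially correct; the one step you assert without justification is that a random $\beta$ escapes $V_{2n}$ \emph{for all large} $n$. The Martin-L\"of test property alone only gives some $n$ with $\beta \notin V_{2n}$; to get cofinitely many you should either observe that $\sum_n \mu(V_{2n}) < \infty$ makes $(V_{2n})$ a Solovay test, or replace $V_{2n}$ by the tails $\bigcup_{m \ge n} V_{2m}$, which still have measure $O(2^{-n})$. After that, the bound $\mu(U_{2n}^\beta) \le 2^{-n}$ holds only from some finite index $k$ on, but the shifted family $(U_{2(n+k)}^\beta)_n$ for that fixed $k$ is a legitimate $\beta$-test catching $\alpha$, so the nonuniformity is harmless.

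The genuine gap is in the second half of your forward direction. With the usual definition of a $\beta$-relative Martin-L\"of test, the bound $\mu(W_n^y) \le 2^{-n}$ is guaranteed only at the actual oracle $y = \beta$, not for all $y$; the oracle machine may enumerate sets of large measure on a positive-measure set of other oracles (e.g., $W_n^y = 2^{\mathbb N}$ for all $y$ starting with $0$, while $\beta$ starts with $1$), in which case Fubini gives $\mu(U_n) \ge 1/2$, not $\le 2^{-n}$. So the equality ``$\mu(U_n) = \int \mu(W_n^y)\,dy \le 2^{-n}$'' does not follow as written. The standard repair is the truncation trick: let $\tilde W_n^y$ be the result of running the enumeration of $W_n^y$ but refusing to add a basic cylinder once the total measure would exceed $2^{-n}$ (enumerating disjoint cylinders, or accepting a slack factor $2$). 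Then $\mu(\tilde W_n^y) \le 2^{-n}$ for \emph{every} $y$, so Fubini bounds $\mu(U_n)$, and since the bound genuinely holds at $\beta$ the truncation never bites there, i.e.\ $\tilde W_n^\beta = W_n^\beta$, so the product test still catches $(\alpha,\beta)$. With this insertion (and the Solovay-test remark above), your proof is the standard argument and goes through.
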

\noindent
One can replace uniform (Lebesgue) measure in the definition of Martin-L\"of randomness
by any measure~$P$. We call sequences that are random 
in this sense {\em$P$-random}.\footnote{\label{foot}
      There exist two types of Martin-L\"of tests relative to a non-computable measure~$P$~\cite{HanssenHippocratic}:
      \begin{itemize}
	\item 
	  A {\em uniform} $P$-Martin-L\"of test 
	  is a $P$-Martin-L\"of test that is effectively open relative to each oracle that computes~$P$.
	\item 
	  A  {\em Hippocratic} or {\em blind} $P$-Martin-L\"of test is a Martin-L\"of test that is effectively open 
	  without any oracle. 
      \end{itemize}
      If $P$ is computable, then both types of tests define the same set of random sequences.
      Otherwise, the second type of tests defines a weaker notion of randomness, which
      we use here. 
      Although the definition looks easier, it might not be the most
      natural definition of randomness relative to a non-computable
      measure~$P$.
  }
There exist two definitions of Martin-L\"of randomness 
for a pair of sequences. The first states that $(\alpha,\beta)$ is random if the join 
$\alpha_{^1}\beta_{^1}\alpha_{^2}\beta_{^2}\dots$ is random. The second definition uses 
the two dimensional variant of a Martin-L\"of test, which is given by a family of
uniformly effectively open sets $U_n \subseteq 2^{\mathbb N} \times 2^{\mathbb N}$ 
such that the uniform measure of $U_n$ is at most $2^{-n}$. Both approaches are 
equivalent.

To generalize van Lambalgen's theorem for computable measures $P$, the first 
approach seems not suitable. Why join two sequences in this specific way? What
does it mean? Also, the most direct approach of replacing Martin-L\"of randomness with
$P$-randomness will make the theorem wrong for trivial reasons: There exist a
computable $P$ and a pair of sequences
$(\alpha,\beta)$ such that $\alpha_{^1}\beta_{^1}\alpha_{^2}\beta_{^2}\dots$ is
$P$-random, while $\alpha$ is not $P$-random.  Indeed, let $P$ be the measure
that concentrates all its mass on the single point $010101\dots$, i.e.,
$P(\{0101\dots\}) = 1$ and $P(S) = 0$ if $0101\dots \not\in S$.  The
sequence $0101\dots$ is $P$-random, but $00\dots$ is not random.

To use the two-dimensional approach, we need to decompose the bivariate measure $P$ 
into two univariate measures.
It is natural to use the marginal and  the conditional 
measure for~$P$. In fact, such decompositions are omnipresent in probability theory,
and it nicely fits the statement of van Lambalgen's theorem, which uses in
the second criterion a conditionally and an unconditionally random sequence.

\bigskip
We now define conditional measure.
Let $2^\mathbb{N}$ denote Cantor space. 
For any string $x$, let $[x]$ be the (basic open) set containing all extensions of~$x$. 
We say that a measure $P$ on $2^\mathbb{N}$ is computable if the function that maps 
each string $x$ to $P([x])$ is computable as a real-valued function. 
Similar for measures $P$ on $2^\mathbb{N} \times 2^\mathbb{N}$.
Following Takahashi~\cite{TakahashiDefinition}, 
we define for each measure $P$ on $2^\mathbb{N} \times 2^\mathbb{N}$, for each $\beta \in 2^\mathbb{N}$,
and for each measurable set $S \subseteq 2^\mathbb{N}$:
\[
P_C(S|\beta) = \lim_{n \rar \infty}
\frac{P(S \times [\beta_1\dots\beta_n])}{P(2^\mathbb{N} \times [\beta_1\dots\beta_n])}.
\]
Let the marginal distribution be $P_M(S) = P(2^\mathbb{N} \times S)$.  

\smallskip
Remark:
The definition of a conditional measure is usually given using the Radon-Nikodym theorem. 
In fact, this theorem defines a set of conditional measures, and each pair of such measures 
coincides on a set $\beta$ of $P_M$-measure one.
Using the Lebesgue differentiation theorem it can be shown that these conditional 
measures also coincide with $P_C(\cdot|\beta)$ for $P_M$-almost all $\beta$. 
We refer to the appendix for more details.

\smallskip
This specific conditional measure is especially suitable to generalize van Lambalgen's
theorem because of the following result 
(see also~\cite[Lemma 10]{JasonRandFromRand}): 
\begin{theorem}[\cite{TakahashiDefinition} Takahashi]\label{th:condDefined}
If $\beta$ is $P_M$-random, then $P_C(\cdot|\beta)$ is defined 
and is a measure.
\end{theorem}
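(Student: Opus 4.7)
My plan is to prove that the limit exists on every cylinder for every $P_M$-random $\beta$ via an effective martingale convergence argument, and then extend the resulting set function on cylinders to a Borel probability measure by Carath\'eodory.

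\medskip

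First, if $\beta$ is $P_M$-random then $P_M([\beta\uh n])>0$ for all $n$, for otherwise the uniformly effectively open family $\{[\beta\uh k]\}_k$ witnesses a $P_M$-Martin-L\"of test catching $\beta$. In particular all denominators in the definition of $P_C$ are nonzero on $\beta$.

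\medskip

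The main step is to show that for each finite string $x$ the function
\[
f_n^x(\beta) \;=\; \frac{P([x]\times[\beta\uh n])}{P_M([\beta\uh n])}
\]
converges as $n\to\infty$ whenever $\beta$ is $P_M$-random. A direct computation shows that $(f_n^x)_n$ is a $[0,1]$-valued martingale under $P_M$ with respect to the filtration generated by the initial segments of $\beta$. Doob's martingale convergence theorem gives $P_M$-almost sure convergence; to sharpen this to convergence on every $P_M$-random $\beta$ I would invoke Doob's upcrossing inequality. For rationals $a<b$ and integers $k,N$, the set $U_N^{x,a,b,k}$ of $\beta$ on which $f_1^x,\dots,f_N^x$ perform at least $k$ upcrossings of $[a,b]$ is clopen and uniformly computable in its parameters (here computability of $P$ is essential), and its $P_M$-measure is at most $1/(k(b-a))$ by Doob's upcrossing bound combined with Markov. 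Taking the union over $N$ gives an effectively open $U^{x,a,b,k}$ with the same measure bound. Enumerating all triples $(x,a,b)$ and choosing $k$ so that the measures shrink geometrically along the enumeration produces a $P_M$-Martin-L\"of test covering every $\beta$ for which some $f_n^x$ has infinitely many upcrossings of some rational interval, equivalently every $\beta$ on which some $f_n^x$ diverges. Hence $\mu_\beta([x]):=\lim_n f_n^x(\beta)$ is defined for every $x$ whenever $\beta$ is $P_M$-random.

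\medskip

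With $\mu_\beta$ defined on cylinders, the identities $f_n^{x0}+f_n^{x1}=f_n^x$ and $f_n^{\varepsilon}\equiv 1$ pass to the limit, so $\mu_\beta$ is a finitely additive probability on the clopen algebra of $2^\mathbb{N}$. By compactness of Cantor space such a finitely additive probability is automatically $\sigma$-additive, so Carath\'eodory's theorem yields a unique Borel probability measure extending it. That the limit in the definition of $P_C(S|\beta)$ exists and equals $\mu_\beta(S)$ for an arbitrary Borel $S$ then follows by $\sigma$-additivity of $P(\,\cdot\,\times[\beta\uh n])$ on countable disjoint unions of cylinders (giving the bound for open $S$), complementation (for closed $S$), and outer regularity of $\mu_\beta$ in the general case.

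\medskip

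The main obstacle is the second step: constructing a $P_M$-Martin-L\"of test covering the set on which some cylinder-martingale fails to converge. Computability of $P$ is used only there, through the uniform clopenness of the upcrossing sets. Everything else---additivity of the limit, Carath\'eodory extension, and passage from cylinders to general Borel sets---is routine measure theory.
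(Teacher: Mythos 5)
The paper does not prove this statement itself; it cites Takahashi (and points to Rute's Lemma~10), and the known proof is exactly the route you take: the cylinder ratios $f_n^x$ form a bounded $P_M$-martingale, an effectivized Doob upcrossing estimate turns the divergence set into a $P_M$-Martin-L\"of test (this is where computability of $P$, the standing assumption in this setting, is used), and the limiting set function on cylinders extends to a Borel probability measure by compactness of Cantor space. That part of your argument is sound, modulo one technical slip: the sets $U_N^{x,a,b,k}$ are not ``uniformly computable'' clopen sets, because inequalities such as $f_n^x(\beta)\le a$ between computable reals are not decidable. You should instead count upcrossings with strict rational inequalities, phrased without division (e.g.\ $P([x]\times[y])<a\,P_M([y])$ and $P([x]\times[y])>b\,P_M([y])$), which are c.e.\ conditions; then $U^{x,a,b,k}$ is uniformly effectively open, the upcrossing bound still applies, and divergence still forces infinitely many such strict upcrossings of some rational interval. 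This is a routine repair, not a gap in the idea.

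The genuine problem is your final paragraph. The theorem (as Takahashi proves it, and as this paper uses it) asserts convergence on cylinders together with the fact that the limit determines a probability measure; it does not require the defining limit to exist and agree with $\mu_\beta(S)$ for every Borel $S$ at the fixed random $\beta$, and your sketch does not establish that stronger claim. For open $S=\bigsqcup_i[x_i]$, termwise convergence plus Fatou gives only $\liminf_n P(S\times[\beta_1\dots\beta_n])/P_M([\beta_1\dots\beta_n])\ge\mu_\beta(S)$; complementation then gives only $\limsup\le\mu_\beta(C)$ for closed $C$. These one-sided bounds bound the ratio for a Borel $S$ from below by closed subsets and from above by open supersets, which is the wrong direction for a squeeze: outer regularity cannot convert a $\liminf$ lower bound for a larger open set into a $\limsup$ upper bound for $S$. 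Without an additional uniform-tightness argument controlling the tail $\sum_{i>I}f_n^{x_i}(\beta)$ uniformly in $n$, existence of the limit for arbitrary open (let alone Borel) $S$ at a single random $\beta$ does not follow. Either restrict the conclusion to cylinders and their Carath\'eodory extension (which is all the theorem needs and all your earlier steps deliver), or supply a genuinely different argument for general $S$; as written, the last step is unsupported.
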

In~\cite[Theorem 29, p14]{noncomputableConditionalMeasure} 
it is shown that for computable $P$, the measure $P_C$ might not be computable. 
The measure that satisfies the conditions of our main result satisfies a similar property:
\begin{corollary}[of the proof of Theorem~\ref{th:VanLambalgenFails}]\label{cor:noncomputableConditional}
  There exists a computable measure $P$ on $2^{\mathbb{N}} \times 2^{\mathbb{N}}$ such that 
  the set of $\beta$ for which $P_C(\cdot|\beta)$ is not computable relative to~$\beta$, has nonzero $P_M$-measure.
\end{corollary}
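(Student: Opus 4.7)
The plan is to read the corollary off directly from the construction in the proof of Theorem~\ref{th:VanLambalgenFails}, using Takahashi's Theorem~3.3 from~\cite{TakahashiDefinition} as the bridge. That theorem says: whenever $P_C(\cdot|\beta)$ is computable relative to~$\beta$, the pair-randomness equivalence of van Lambalgen holds at~$\beta$. Contrapositively, any $\beta$ witnessing the failure of van Lambalgen must have a conditional measure $P_C(\cdot|\beta)$ that is not computable from~$\beta$.

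Concretely, I would first isolate from the proof of Theorem~\ref{th:VanLambalgenFails} the computable measure $P$ and the set
\[
B \;=\; \{\beta \in 2^{\mathbb{N}} : \text{van Lambalgen's equivalence fails at } \beta \text{ for } P\},
\]
and verify that $P_M(B) > 0$. Then the contrapositive of Takahashi's Theorem~3.3 gives $B \subseteq \{\beta : P_C(\cdot|\beta)\text{ is not }\beta\text{-computable}\}$, and the inclusion together with $P_M(B) > 0$ is precisely the claim of the corollary.

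The main obstacle, and really the only substantive content beyond Theorem~\ref{th:VanLambalgenFails} itself, is verifying $P_M(B) > 0$. The bare statement of the theorem only requires one witness~$\beta$, so it must be checked that the construction is not accidentally pinned to an isolated sequence. In the typical setup for this kind of result, the measure $P$ is built by a coding scheme in which the ``bad'' behaviour of $P_C(\cdot|\beta)$ is caused uniformly by the bits of $\beta$ on a prescribed subset of coordinates, so that the failure is translation-invariant along the remaining coordinates and the witnessing set~$B$ naturally contains a whole basic cylinder of positive $P_M$-measure. I expect the construction used for Theorem~\ref{th:VanLambalgenFails} to have exactly this shape, and so $P_M(B) > 0$ should be visible from the same effective-measure estimates used to prove the theorem. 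If for some reason the construction produces the failure only on a null set, a small modification—averaging~$P$ against uniform measure on an extra factor or over a cylinder of~$\beta$'s—would symmetrize the failure to a positive-measure set without destroying computability of~$P$.
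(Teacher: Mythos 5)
Your overall strategy---take the contrapositive of Theorem~\ref{th:TakahashiVanLambalgen} and show that the set of $\beta$ at which the equivalence fails has positive $P_M$-measure---is logically sound, differs from the paper's argument, and for this particular construction it can in fact be completed: the proof of Theorem~\ref{th:VanLambalgenFails} shows that the single fixed $\alpha$ witnesses the failure at \emph{every} $\beta$ that is $\mu$-random relative to $\alpha$ (for any such $\beta$ the pair $(\alpha,\beta)$ is $P$-random while $\alpha$ is not $P_C(\cdot|\beta)$-random, even without oracle, hence not random relative to $\beta$), and since $P$ coincides with the uniform measure on $[\alpha,1]\times 2^{\mathbb N}$ one gets $P_M(S) \ge P([\alpha,1]\times S) = (1-\alpha)\,\mu(S)$, so this $\mu$-measure-one set of $\beta$ has $P_M$-measure at least $1-\alpha>0$. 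The gap is that you never carry out this verification: you only conjecture that the construction has a ``typical shape'' with the failure spread over a cylinder, and you offer an unanalyzed fallback (averaging $P$ against uniform measure) in case the failure set were null. Since the statement is explicitly a corollary \emph{of the proof} of Theorem~\ref{th:VanLambalgenFails}, this measure estimate is the entire substantive content, and leaving it as an expectation plus a vague repair is a genuine gap; the proposed repair is itself not obviously sound, since averaging changes the conditional measures and would need a fresh argument.

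For comparison, the paper does not invoke Theorem~\ref{th:TakahashiVanLambalgen} at all. It observes that for every $P_M$-random, non-binary-rational $\beta$ (a set of $P_M$-measure exactly $1-\alpha>0$) the conditional measure $P_C(\cdot|\beta)$ is literally the uniform measure on $[\alpha,1]$, a single object whose values compute the noncomputable real $\alpha$; then Lemma~\ref{lem:probComp} (De Leeuw--Moore--Shannon--Shapiro) shows that only a $P_M$-null set of oracles $\beta$ can compute this measure, so $P_C(\cdot|\beta)$ fails to be $\beta$-computable for $P_M$-almost every such $\beta$. That route is self-contained modulo an elementary lemma and identifies the noncomputable conditional measures explicitly, whereas your route, once you supply the missing $P_M$-measure estimate, is shorter but imports Takahashi's theorem (whose proof is nontrivial) as a black box.
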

\newcounter{cor}
\setcounter{cor}{\value{theorem}}
The corollary is proven after Theorem~\ref{th:VanLambalgenFails}. Similar examples 
of such measures were invented by Jason Rute~\cite{JasonRuteNoncomputable}.
In the example from~\cite{noncomputableConditionalMeasure}, 
definitions of computability of functions and measures from computable analysis are used. 
They can be used on general spaces but are rather difficult to formulate.
Functions that are not computable in this sense include all functions with a discontinuity. 
Therefore, the example in~\cite{noncomputableConditionalMeasure} is made in such a way that
$P_C(S|\beta)$ is continuous in $\beta$ for all measurable sets $S$.
Using the same idea as in~\cite{noncomputableConditionalMeasure}, we present a simplified 
construction of such a measure in the proof of Theorem~\ref{th:continuousNoncomputable} below.
  This proof does not rely on other parts of this note.

\medskip
Hayato Takahashi generalized van Lambalgen's theorem as follows: 
\begin{theorem}[Takahashi~\cite{TakahashiProduct,TakahashiGeneralization}]\label{th:TakahashiVanLambalgen}
  For any computable bivariate measure $P$ and any $\beta$ such that $P_C(\cdot|\beta)$ is
  computable relatively to $\beta$, the following are equivalent:
  \begin{itemize}
    \item $(\alpha,\beta)$ is $P$-random,
    \item $\beta$ is $P_M$-random and $\alpha$ is $P_C(\cdot|\beta)$-random relative to~$\beta$.
  \end{itemize}
\end{theorem}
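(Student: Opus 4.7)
My plan is to prove both implications using the disintegration identity
\[
P(U) \;=\; \int P_C(U^{\beta'}\,|\,\beta')\,dP_M(\beta'),\qquad U^{\beta'}:=\{\alpha':(\alpha',\beta')\in U\},
\]
as the main tool, together with the Turing functional $\Psi$ satisfying $\Psi^\beta=P_C(\cdot|\beta)$ (which exists by hypothesis) to translate between relativized and unrelativized tests.

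For the direction \emph{joint $\Rightarrow$ separate}, I would note that a $P_M$-test $V_n$ for $\beta$ lifts to the $P$-test $2^{\mathbb N}\times V_n$, giving $P_M$-randomness of $\beta$ immediately. If $\alpha$ is caught by a test $W_n$ computed by some functional $\Phi$ with oracle $\beta$ and measure representation $\Psi^\beta$, I would set
\[
U_n \;=\; \{(\alpha',\beta') : \alpha' \in \Phi^{\beta',\Psi^{\beta'}}_n\},
\]
which is uniformly effectively open, and force the size bound by self-censoring $\Phi$ as soon as the $\Psi^{\beta'}$-weight of its current output exceeds $2^{-n}$. Because $\Psi^{\beta'}$ need not represent the correct conditional measure for a generic $\beta'$, I would further enumerate candidate representations $\Psi_e$ with a Solovay-style $2^{-e}$ weighting; the true $\Psi_{e_0}$ still contributes at controlled cost, and disintegration applied on the $P_M$-positive set where $\Psi_e^{\beta'}=P_C(\cdot|\beta')$ yields the overall $P$-bound, contradicting joint randomness.

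For the direction \emph{separate $\Rightarrow$ joint}, assume $\{U_n\}$ is a $P$-test with $(\alpha,\beta)\in\bigcap_n U_n$ and set $f_n(\beta'):=P_C(U_n^{\beta'}|\beta')$, so $\int f_n\,dP_M\le 2^{-n}$. If $f_n(\beta)\le 2^{-n/2}$ for all large $n$, then the slices $U_n^\beta$ are c.e.\ relative to $\beta$ and, using $\beta$-computability of $P_C(\cdot|\beta)$, form a $P_C(\cdot|\beta)$-test capturing $\alpha$ relative to $\beta$, contradicting its conditional randomness. Otherwise $V_m:=\{\beta':f_{2m}(\beta')>2^{-m}\}$ has $P_M(V_m)\le 2^{-m}$ by Markov and contains $\beta$ infinitely often; I would convert this to a Hippocratic $P_M$-test by the same candidate-functional enumeration used above, this time employed to lower-semicompute $f_{2m}$ from below without an oracle.

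The main obstacle in both halves is that $\beta'\mapsto P_C(\cdot|\beta')$ is not globally computable—the hypothesis gives only $\beta$-computability, not almost-everywhere computability—so $P_C(\cdot|\beta')$ cannot appear directly inside any uniform c.e.\ construction. The Solovay sum over surrogate functionals $\Psi_e$ together with self-censoring is what sidesteps this, and the cleanest delicacy is extracting a genuine lower-semicomputable approximation of $f_n(\beta')$ from the limit definition of $P_C$; this is precisely what the computability hypothesis is ultimately purchased for.
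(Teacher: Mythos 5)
The paper does not actually prove this theorem---it is quoted from Takahashi's work and the author refers elsewhere for the proof---so your attempt can only be judged on its own merits, and it has a genuine gap in each direction, both traceable to the same device: the Solovay-weighted candidate functionals $\Psi_e$ do not do the work you assign to them. In the direction ``$(\alpha,\beta)$ $P$-random $\Rightarrow$ $\alpha$ is $P_C(\cdot|\beta)$-random relative to $\beta$'' (which is precisely the implication that Theorem~\ref{th:VanLambalgenFails} refutes once the computability hypothesis is dropped, so any proof must use that hypothesis essentially), self-censoring $\Phi^{\beta',\Psi_e^{\beta'}}$ by the $\Psi_e^{\beta'}$-weight of its output only bounds the $\Psi_e^{\beta'}$-measure of the slice $U_n^{\beta'}$. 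To bound $P(U_n)$ through the disintegration identity you need control of $P_C(U_n^{\beta'}|\beta')$ for $P_M$-almost every $\beta'$, and on columns where $\Psi_e^{\beta'}\neq P_C(\cdot|\beta')$ the censored slice may have conditional measure close to $1$. The hypothesis speaks only about the single point $\beta$; the set $\{\beta':\Psi_e^{\beta'}=P_C(\cdot|\beta')\}$ may well be $P_M$-null, and even if it had positive measure, restricting the disintegration integral to it does not bound $P(U_n)$, because $P(U_n)$ includes the uncontrolled contribution of the complement. So no size bound for your joint test follows, and this cannot be repaired by bookkeeping alone: the computability of $P_C(\cdot|\beta)$ has to enter in a way that controls \emph{all} columns, e.g.\ by certifying each enumerated rectangle $F\times[y]$ against the computable ratios $P(F\times[y])/P_M([y])$ before admitting it, with the $\beta$-computability of the conditional used to guarantee that along the true $\beta$ the certification eventually succeeds for every relevant finite piece.

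In the converse direction your Markov/disintegration skeleton is right, but the missing step is the effectivity of $V_m$: the map $\beta'\mapsto P_C(U_{2m}^{\beta'}|\beta')$ is not lower semicomputable, and enumerating candidate functionals cannot make it so, since whether a candidate computes the true conditional at a given $\beta'$ is not a c.e.\ event. The standard repair, absent from your sketch, is to replace $f_{2m}(\beta')$ by the maximal ratio
\[
\sup_k \frac{P\bigl(U_{2m}\cap(2^{\mathbb N}\times[\beta'_1\dots\beta'_k])\bigr)}{P_M([\beta'_1\dots\beta'_k])},
\]
which is lower semicomputable because $P$ is computable and $U_{2m}$ is effectively open, dominates $P_C(U_{2m}^{\beta}|\beta)$ at the true $\beta$ (exhaust the open slice by its finitely determined parts), and obeys a Doob/Vitali-type maximal inequality on the tree giving $P_M(V_m)\le 2^{m}P(U_{2m})$. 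With that substitution this direction goes through and in fact needs neither the computability of $P_C(\cdot|\beta)$ nor any candidate-functional machinery; note also that $P_M$ is computable (so no ``Hippocratic'' conversion is needed there), and in your first case the blind notion of randomness used in this paper already makes the slices $U_{2m}^{\beta}$ a conditional test without invoking the computability of $P_C(\cdot|\beta)$.
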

For an alternative exposition of the proof and for related results,
I refer to the upcoming article~\cite{compVanLambalgenOverview}.
One might ask whether the theorem only holds for $\beta$ for which
$P_C(\cdot|\beta)$ is computable relative to~$\beta$? 
Our main result shows that we can not drop this assumption, hence, 
van Lambalgen's theorem fails for some computable measure. 
We emphasize that for a non-computable measure $Q$, our definition of
$Q$-randomness corresponds to what is usually called {\em blind} or  {\em
Hippocratic} randomness in the literature (see footnote~\ref{foot} for more
details).
\begin{theorem}\label{th:VanLambalgenFails}
  There exists a bivariate computable measure $P$ on $2^{\mathbb N} \times 2^{\mathbb N}$ 
  and a pair of sequences $(\alpha,\beta)$ such that the pair is $P$-random 
  and $\alpha$ is not $P_C(\cdot|\beta)$-random (thus even without oracle $\beta$).
\end{theorem}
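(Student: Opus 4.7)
The strategy has two halves. First, I would construct a computable bivariate measure $P$ whose conditional $P_C(\cdot|\beta)$ fails to be computable relative to~$\beta$ on a $P_M$-positive set of $\beta$'s; by Theorem~\ref{th:TakahashiVanLambalgen} this is a necessary condition for a van Lambalgen failure. Second, inside this set I would exhibit a pair $(\alpha,\beta)$ together with a uniformly effectively open sequence $\{U_n\}$ (without any oracle) satisfying $P_C(U_n|\beta)\le 2^{-n}$ and $\alpha\in\bigcap_n U_n$, while keeping $(\alpha,\beta)$ a $P$-random pair. The key measure-theoretic observation making the two demands compatible is that although the set $\bigcap_n U_n\times A$ (where $A$ is the $\beta$-set on which the conditional is bad) will have $P$-measure zero, it need not be \emph{effectively} $P$-null: any natural candidate $P$-test has $P$-measure bounded by $2^{-n}+(\Omega-\Omega_n)$ for a left-c.e.\ noncomputable real $\Omega$ built into the construction, and since $\Omega-\Omega_n$ has no computable rate of decay, no Martin-L\"of test uniformly covers the null set.

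For the construction I would adapt the template used in \cite{noncomputableConditionalMeasure} and in the proof of Theorem~\ref{th:continuousNoncomputable}. Fix a noncomputable left-c.e.\ real $\Omega\in(0,1)$ with computable rational enumeration $\Omega_s\uparrow\Omega$, let $P_M$ be Lebesgue measure on~$\beta$, identify $\beta$ with $r(\beta)=\sum_i\beta_i 2^{-i}\in[0,1]$, and partition $\beta$-space into clopen slabs $A_s=\{\beta:\Omega_{s-1}\le r(\beta)<\Omega_s\}$ (with $\Omega_0=0$) together with a tail $A_\infty=\{\beta:r(\beta)\ge\Omega\}$ of Lebesgue measure $1-\Omega$. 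I would define $P$ as the total-variation limit of computable partial sums $P_s$, where the stage-$s$ perturbation is supported on $\{\beta:r(\beta)\ge\Omega_{s-1}\}$, has total variation at most $\Omega_s-\Omega_{s-1}$, and on the $\alpha$-side affects only coordinates past a specific depth. The first two properties give $\sum_s\|P_{s+1}-P_s\|<\infty$ with a computable rate, so $P$ is computable; the third ensures that only finitely many perturbations matter for any given cylinder, so each $P([r]\times[t])$ is a finite computable sum. The accumulated effect of the perturbations on $A_\infty$ yields a conditional $P_C(\cdot|\beta)=Q_\infty$ for $\beta\in A_\infty$ whose form depends on all stages and hence on the full value of $\Omega$, and is therefore not computable from $\beta$ (already deciding $\beta\in A_\infty$ requires $\Omega$). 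This also establishes Corollary~\ref{cor:noncomputableConditional}.

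For the witnessing pair, I would arrange the perturbations so that a fixed uniformly effectively open sequence $\{U_n\}$ satisfies $Q_\infty(U_n)\le 2^{-n}$ while each $U_n$ has near-full measure under every stagewise conditional $Q_s$ on $A_s$; this is the freedom the construction allows, since only the weighted sum of the $Q_s-Q_\infty$ differences (controlled by $\Omega_s-\Omega_{s-1}$) is constrained by computability of $P$. Then $\bigcap_n U_n\times A_\infty$ is $P$-null but not effectively $P$-null: any effective cover $V_n\supseteq\bigcap_m U_m\times A_\infty$ must extend to cover $A_\infty$ in the $\beta$-direction, forcing it to include mass from slabs $A_s$ with $s$ beyond any finite threshold, an unavoidable tail contribution of the non-computable quantity $\Omega-\Omega_n$ that no Martin-L\"of bound $2^{-n}$ can absorb. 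The null set therefore contains $P$-random pairs, and picking any one gives $(\alpha,\beta)$ witnessing the theorem. The main obstacle is making this last claim precise: it requires a careful case analysis of candidate effective covers, decomposed along the slabs $A_s$ and the tail $A_\infty$, to show that the tail contribution $\Omega-\Omega_n$ is genuinely unavoidable for any $P$-test attempting to trap $\bigcap_m U_m\times A_\infty$.
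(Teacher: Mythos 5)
Your plan defers exactly the step that constitutes the theorem, and there is a concrete reason to doubt that the deferred step can be carried out in the form you describe. You want to conclude that $\bigcap_m U_m\times A_\infty$ is $P$-null but not \emph{effectively} $P$-null, and then pick a $P$-random pair inside it; but neither the measure $P$, nor the sets $U_n$, nor the non-effectiveness claim is actually constructed or proved --- you only postulate that the perturbations can be ``arranged'' and acknowledge that the case analysis of covers is the main obstacle. Worse, the heuristic you give (any cover has measure about $2^{-n}+(\Omega-\Omega_s)$ and $\Omega-\Omega_s$ has no computable rate of decay) is not by itself an obstruction: a test does not need to certify that $\Omega-\Omega_s$ is small, only that the $P$-measure of a candidate open set is small, and $P$ is computable. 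For instance, if the $U_n$ are clopen (nothing in your sketch prevents this), then $P\bigl(U_n\times\{\beta: r(\beta)\ge\Omega_s\}\bigr)$ is a computable real uniformly in $(n,s)$; since these values decrease to $Q_\infty(U_n)\,(1-\Omega)\le 2^{-n}(1-\Omega)<2^{-n}$ as $s\to\infty$, one can effectively search for a stage $s(n)$ at which the value is verified to be below $2^{-n}$, and the sets $V_n=U_n\times\{\beta: r(\beta)\ge\Omega_{s(n)}\}$ form a Martin-L\"of $P$-test covering $\bigcap_m U_m\times A_\infty$. So in that case your target set is effectively null and contains no $P$-random pair at all. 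Hence your argument can only survive if the $U_n$ and the perturbations are chosen so that all such measure computations are blocked \emph{and} every other effective cover is ruled out --- precisely the delicate part that is missing.

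By contrast, the paper never argues abstractly that a null set must contain a random point; it exhibits the pair explicitly and verifies both properties directly. It takes $\alpha$ to be a Martin-L\"of random left-c.e.\ real with computable approximation $\alpha_n\uparrow\alpha$, builds $P$ by concentrating the uniform measure on the strips $[\alpha_n,\alpha_{n+1}]\times[w]$, $|w|=n$, onto the sequences $w000\dots$, so that for every $\beta$ with infinitely many ones $P_C(\cdot|\beta)$ is the uniform measure on $[\alpha,1]$; then $\alpha$ fails the conditional test $]0,\alpha+2^{-n}[$, which is effectively open because $\alpha$ is left-c.e., while $P$-randomness of $(\alpha,\beta)$ (with $\beta$ chosen random relative to $\alpha$) is proved by trimming any $P$-test at the lines $\alpha_{|x|}$ into a test for the uniform measure with no larger measure. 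If you want to salvage your route, you would need either an explicit witnessing pair with a direct randomness verification of this kind, or a genuine proof --- not a sketch --- that your particular null set escapes every $P$-test.
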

Note that if $(\alpha,\beta)$ is $P$-Martin-L\"of random, then $\beta$ is $P_M$-Martin-L\"of random, 
and by Theorem~\ref{th:condDefined} the measure $P_C(\cdot|\beta)$ exists.

\medskip
\noindent
 {\em Definitions}\, 
Let $\mu$ be the uniform measure, i.e., $\mu([x]) = 2^{-|x|}$ for any string~$x$. 
We also use $\mu$ for the product of two uniform measures over $2^{\mathbb N} \times 2^{\mathbb N}$.
Real numbers in $[0,1]$ that are not binary rational, are interpreted as elements of $2^\mathbb{N}$. 
For binary rational numbers $\alpha$ and
$\beta$, we associate $[\alpha, \beta]$ with the corresponding basic open set in Cantor space 
(thus only containing the binary representation of $\alpha$ with a tail of zeros, and a tail of ones for $\beta$).

\newcommand{\drawlines}{
  \draw (0,\a)   node[anchor=north,gray]  {\footnotesize{$\alpha_1$}}  -- +(4,0);
  \draw (0,\aa)  node[anchor=north,gray] {\footnotesize{$\alpha_2$}}  -- +(4,0);
  \draw (0,\aaa) node[anchor=north,gray] {\footnotesize{$\alpha_3$}}  -- +(4,0);
  \draw (0,\afour)   -- +(4,0);
  \draw       (0,\alim)  node[anchor=north] {$\alpha$}    -- +(4,0);

  \draw[line width=2.5pt] (0.00,0) -- +(0,\a);

  \foreach \x  in {0.00,2.00}
    {\draw[line width=2pt] (\x,\a) -- (\x,\aa);}
  \foreach \x  in {0.0,1,...,3} 
    {\draw[line width=1.5pt] (\x,\aa) -- (\x,\aaa);}
  \foreach \x  in {0.00,0.50,...,3.50} {
    \draw[line width=1pt] (\x,\aaa) -- (\x,\afour);
    }
  \foreach \x  in {0,0.25,...,3.75} 
    {\draw[gray] (\x,\afour) -- (\x,\alim);}
  \draw rectangle (4,4);
  }

\begin{figure}
  \def\a{1.2}
  \def\aa{1.9}
  \def\aaa{2.3}
  \def\afour{2.55}
  \def\alim{2.7}

  \centering
  \begin{tikzpicture}[yscale=1.2,xscale=-1.2,rotate=90]
    \filldraw[fill=gray!20] (0,\alim) rectangle (4,4);
    \drawlines
  \end{tikzpicture}
  \quad\quad
  \begin{tikzpicture}[yscale=1.2,xscale=-1.2,rotate=90]
    \filldraw[fill=gray!20] (0,\alim) rectangle (4,4);

    \fill[dashed, fill=gray!35] (0,0.5) rectangle +(3.98,0.6);
    \draw[dashed, draw=gray] (0,0.5) rectangle +(1,0.6);
    \draw[decorate, decoration={brace,amplitude=3pt},xshift=-1pt,gray] (0,1.1) -- node[anchor=north,yshift=-1pt] {$I$} +(0,-0.6);

    \draw[decorate, decoration={brace,amplitude=3pt},yshift=-1pt,gray] (0.05,0.5) -- node[anchor=east] (tt) {\small{$[x]=[00]$}} +(0.95,0);

    \draw[dashed, draw=gray] (2,0.5) rectangle +(1,0.6);
    \draw[decorate, decoration={brace,amplitude=3pt},yshift=-1pt,gray] (2.05,0.5) -- node[anchor=east] (tt) {\small{$[x]=[10]$}} +(0.95,0);

    \fill[dashed, fill=gray!35] (0,1.4) rectangle +(1.96,0.4);
    \draw[dashed, draw=gray] (0,1.4) rectangle +(1,0.4);
    \draw[decorate, decoration={brace,amplitude=3pt},xshift=-1pt,gray] (0,1.8) -- node[anchor=north,yshift=-1pt] {$I$} +(0,-0.4);

    \filldraw[dashed, draw=blue, fill=blue!25] (1.98,2.1) rectangle (2.98,3.2);
    \node[anchor=north,blue] at (1.98,2.1) {$r$};
    \node[anchor=north,blue] at (1.98,3.2) {$s$};

    \drawlines
  \end{tikzpicture}
  \caption{Left: The measure $P$. Thick black lines represent concentrated measure.
  Right: Some values for $P(I \times [x])$. From left to right: 
     For $I \subseteq [0, \alpha_1]$, 
     $P(I \times [00]) = \mu(I)$ and $P(I \times [10]) = 0$.
     For $I \subseteq [\alpha_1, \alpha_2]$, we have $P(I \times [00]) = \mu(I)/2$.
     For $\alpha_{|x|} \le r < s \le 1$ we have $P([r,s] \times [x]) = \mu([r,s] \times [x])$.}
  \label{fig:constructionP}
\end{figure}
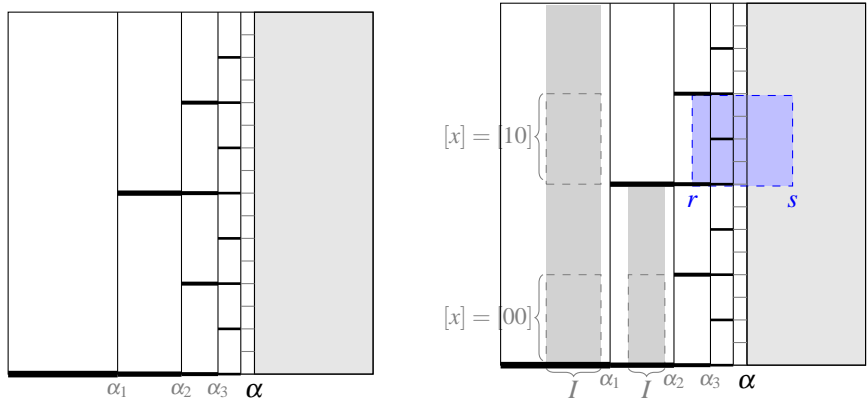

\begin{proof}
  There exists an increasing computable sequence $\alpha_1$, $\alpha_2$, \dots of binary rational numbers that
  converges to a Martin-L\"of random real $\alpha$. (See e.g.~\cite[Theorem 4.3]{Chaitin75}.) 
  To construct the bivariate measure $P$,
  modify the uniform measure on $2^\mathbb{N} \times {2^\mathbb{N}}$ as illustrated in figure~\ref{fig:constructionP}
  left: concentrate all measure in the vertical strip $[0,\alpha_1] \times 2^\mathbb{N}$ 
  uniformly in its lowest horizontal interval, i.e., in $[0,\alpha_1] \times 000\dots$;  
  concentrate the measure in the intervals $[\alpha_1,\alpha_2] \times [0]$ and $[\alpha_1,\alpha_2] \times [1]$
  uniformly in their lowest positions, 
  i.e., in $[\alpha_1, \alpha_2] \times 000\dots$ and $[\alpha_1, \alpha_2] \times1000\dots$; and so on.\footnote{
    The construction has some similarities with the measure 
    constructed in the proof of Proposition 6.3 in~\cite{BienvenuPorter}: 
    the measure has also singularities that approach a left computable real. 
    However, I believe there is no deeper correspondence between this measure and the measure 
    constructed here.
    }

  Before presenting the formal definition, let us
  illustrate the construction of $P$. Consider an interval $I \subseteq [0, \alpha_1]$. 
  We have $P(I \times [x]) = 0$ if $x$ contains at least one $1$, and $P(I \times [x]) = \mu(I)$ otherwise,
  see figure~\ref{fig:constructionP} right. 
  For $I \subseteq [\alpha_1, \alpha_2]$ we have $P(I \times [0x]) = P(I \times
  [1x]) = 0$ if $x$ contains at least one $1$ and $\mu(I)/2$ otherwise.  
  
  \medskip
  
  We define the measure more formally for every basic open set $I \times [y]
  \subseteq 2^\omega \times 2^\omega$. 
  We consider several cases:
  \begin{itemize}
    \item If $I \subseteq [\alpha, 1]$, then $P(I \times [y]) = \mu(I \times [y])$.
    \item If $I \subseteq [\alpha_n, \alpha_{n+1}]$ and $|y| \ge n$, 
      then let $y = wx$ where $w$ represents the first $n$ bits of $y$. If:
      \begin{itemize}
	\item $x$ contains at least one $1$, then $P(I \times [wx]) = 0$,
	\item otherwise, i.e. if $x$ is empty or contains only zeros, $P(I \times [wx]) = \mu(I \times [w])$.
      \end{itemize}
    \item Otherwise, we partition the basic open set in (countably many) other basic open sets that satisfy one of the 
      conditions above. The measure is the sum of the measures of all sets in the partition. 
  \end{itemize}
  
 \noindent 
  Note that for any string $x$, the $P$-measure of
  $[\alpha_{|x|}, \alpha] \times [x]$ equals the uniform measure, i.e., 
  \[
    P([\alpha_{|x|}, \alpha] \times [x]  ) = \mu([\alpha_{|x|}, \alpha] \times [x] ). 
  \]
  The same holds for any set $[r, s] \times [x]$ with $\alpha_{|x|} \le r < s \le 1$:
  $
    P([r,s] \times [x]  ) = \mu([r,s] \times [x] ) 
  $,
  see figure~\ref{fig:constructionP} right.
  $P$ is computable, because $\alpha_{|x|}$ is computable from $x$, and
  the vertical line at $\alpha_{|x|}$ splits any interval $I \times [x]$ in at most two parts, see figure~\ref{fig:constructionTest}; 
  for each part the $P$-measure is easily calculated: 
  the measure of the part at the right of $\alpha_{|x|}$ equals its uniform measure, and the measure in the 
  left part can be partitioned into finitely many pieces which each satisfies one of the cases 
  in the definition of~$P$.

  \medskip
  We choose $\beta$, such that $(\alpha,\beta)$ is Martin-L\"of random relative to the uniform measure. 
  By the original version of van Lambalgen's theorem, it suffices to choose $\beta$ to be 
  random relative to $\alpha$. 
  Clearly, $\beta$ contains infinitely many ones. 

  \medskip
  We show that the pair $(\alpha,\beta)$ is also $P$-random.
  Let $(V_n)_{n \in \mathbb{N}}$ be a Martin-L\"of test relative to~$P$. It suffices to
  convert this test to a Martin-L\"of test $(U_n)_{n \in \mathbb{N}}$ relative to 
  the \textbf{u}niform measure 
  such that $V_n$ and $U_n$ have the same intersection with the vertical line at position $\alpha$. 
  More precisely, it suffices for each $U_n$ to be uniformly effectively open such that:
  \begin{itemize}
    \item $V_n \cap \left( \{\alpha\} \times 2^\mathbb{N}  \right) 
      = U_n \cap \left( \{\alpha\} \times 2^\mathbb{N} \right)$,
    \item $\mu(U_n) \le P(V_n)$.
  \end{itemize}
  (Indeed, this implies that if $(\alpha,\beta)$ was not~$P$-random, then it is also not 
  random relative to the uniform measure and this would contradict the construction.)
  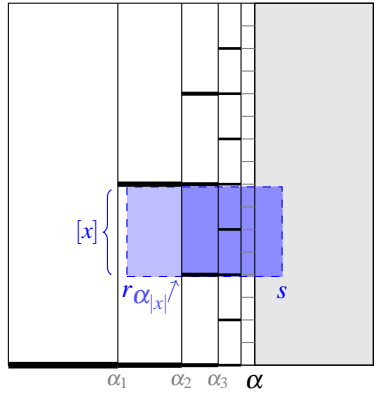
\begin{figure}
      \def\a{1.2}
      \def\aa{1.9}
      \def\aaa{2.3}
      \def\afour{2.55}
      \def\alim{2.7}
    \centering
  \begin{tikzpicture}[yscale=1.2,xscale=-1.2,rotate=90]
    \filldraw[fill=gray!20] (0,\alim) rectangle (4,4);

      \filldraw[dashed, blue!25] (0.98,1.3) rectangle (1.97,3);
      \filldraw[dashed, blue!45] (0.98,\aa) rectangle (1.97,3);
      \draw[dashed, blue] (0.98,1.3) rectangle (1.97,3);
      
      \node[anchor=north,blue] at (0.98,1.3) {$r$};
      \node[anchor=north,blue] at (0.98,3) {$s$};
      \draw[->,blue!70] (0.7,1.75) node[anchor=east,blue!70,xshift=1.5mm] {$\alpha_{|x|}$} -- (0.94,1.84) ;
      \draw[decorate, decoration={brace,amplitude=3pt},yshift=-2pt,blue] (1.00,1.2) -- node[anchor=east] (tt) {\small{$[x]\,$}} +(0.94,0);

    \drawlines
  \end{tikzpicture}
%
%
%
%
%
%
%

    \caption{The Martin-L\"of test $U_n$ for $\mu$ is obtained by trimming 
    each basic set enumerated into a set $V_n$.}
    \label{fig:constructionTest}
  \end{figure}
  Construction of $U_n$: Each time an interval $[r,s] \times [x]$ is enumerated into a set~$V_n$,
  enumerate its right part starting from $\alpha_{|x|}$ into $U_n$, i.e., 
  $[\max\{r,\alpha_{|x|}\},s] \times [x]$ if $s > \alpha_{|x|}$ and nothing otherwise, see
  figure~\ref{fig:constructionTest}.
  Note that $V_n$ and $U_n$ have the same intersection with the line at $\alpha$ because 
  enumerated intervals are only modified at the left of $\alpha_{|x|}<\alpha$.
  The sets $U_n$ are uniformly effectively open. Hence, the first condition is satisfied. 
  Finally, observe that $\mu(U_n) \le P(V_n)$:
  for each enumerated interval $[r,s] \times [x]$, nothing is changed unless $r < \alpha_{|x|}$, 
  and in this case we have
  $\mu([\alpha_{|x|},s] \times [x]) = P([\alpha_{|x|},s] \times [x]) \le
  P([r,s] \times [x]\,)$.
  Because $U_n$ and $V_n$ are the union of corresponding rectangles, the second condition is also 
  satisfied.

  \medskip
  Because $(\alpha,\beta)$ is $P$-random, $\beta$ is $P_M$-random and $P_C(\cdot|\beta)$ is defined.
  It remains to show that $\alpha$ is not $P_C(\cdot|\beta)$ random.
  We determine $P_C(\cdot|\beta)$.
  Observe that if $b$ is the empty string or a string that ends with a one, the measure
  \[
  \frac{P(\cdot \times [b])}{P_M([b])}
  \]
  is the uniform measure with support $[\alpha_{|b|},1]$.
  Because $\beta$ is $\mu$-random by construction, it contains infinitely many ones. 
  Hence, the limit in the definition of $P_C(\cdot|\beta)$ 
  (which exists), must be the uniform measure with support $[\alpha,1]$.
  This implies that the point $\alpha$ is not $P_C(\cdot|\beta)$-random: 
  the open sets $U_n =\; ]0, \alpha+2^{-n}[$ contain $\alpha$ for all $n$, are uniformly
  effectively open, and have $P(\cdot|\beta)$-measure $O(2^{-n})$. 
\end{proof}

\medskip
In the proof of Corollary~\ref{cor:noncomputableConditional}, we use the following observation: 
\begin{lemma}[De Leeuw, Moore, Shannon and Shapiro~\cite{DeLeeuwEtAl1955}]\label{lem:probComp}
  Let $Q$ be a computable measure on $2^\mathbb{N}$ and let $\alpha \in 2^\mathbb N$. If there exists a set of 
  positive $Q$-measure of sequences that compute $\alpha$, then $\alpha$ is computable.
\end{lemma}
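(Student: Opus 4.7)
The plan is to reduce the statement to a claim about atoms of lower-semicomputable sub-probability measures on~$2^{\mathbb{N}}$, and then to establish that claim via a ``zoom-in'' argument.

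First, I enumerate the Turing functionals as $\Phi_0,\Phi_1,\dots$ and set $A_e = \{\gamma : \Phi_e^\gamma = \alpha\}$. The hypothesis asserts $Q\bigl(\bigcup_e A_e\bigr) > 0$, so by countable additivity I would fix some~$e$ with $Q(A_e) = s > 0$. For each finite string~$\sigma$ I define $V_\sigma = \{\gamma : \Phi_e^\gamma\text{ extends }\sigma\}$; this set is uniformly effectively open, so $Q(V_\sigma)$ is lower-semicomputable uniformly in~$\sigma$. Since $V_{\sigma 0}$ and $V_{\sigma 1}$ are disjoint subsets of~$V_\sigma$, the values $\nu([\sigma]) := Q(V_\sigma)$ determine a lower-semicomputable sub-probability measure~$\nu$ on $2^{\mathbb{N}}$ (the pushforward of~$Q$ by~$\Phi_e$), and
\[
\nu(\{\alpha\}) \;=\; Q\!\left(\textstyle\bigcap_n V_{\alpha\upharpoonright n}\right) \;=\; Q(A_e) \;=\; s.
\]

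Next, I would prove that any atom of positive mass of a lower-semicomputable sub-probability measure on~$2^{\mathbb{N}}$ is computable. Since the cylinders $[\alpha\upharpoonright n]$ decrease to $\{\alpha\}$, the values $\nu([\alpha\upharpoonright n])$ decrease to~$s$. I would pick a rational $s' \in (s/2,\,s)$ and an index~$n_0$ with $\nu([\alpha\upharpoonright n_0]) < 2s'$, and hard-code $s'$, $n_0$, and~$\alpha\upharpoonright n_0$ into the algorithm as finite non-uniform advice (all depending on~$\alpha$, which is harmless, as we only need existence of some algorithm). Inductively, for $n \ge n_0$ and $\sigma := \alpha\upharpoonright n$, disjointness yields $\nu([\sigma 0]) + \nu([\sigma 1]) \le \nu([\sigma]) < 2s'$, so the correct child $\alpha\upharpoonright(n+1)$ has $\nu$-mass $\ge s > s'$ while the other has $\nu$-mass $\le 2s' - s < s'$. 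Lower-semicomputing the two child masses in parallel and waiting for one of them to provably exceed~$s'$ therefore identifies the next bit of~$\alpha$, yielding the desired algorithm.

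The main obstacle is the absence of any quantitative hypothesis beyond $s > 0$: without a threshold such as $s > 1/2$, the correct prefix of length~$n$ is not characterized as the unique $\nu$-heavy cylinder at that level uniformly in~$n$, so there is no direct ``unique heavy prefix'' algorithm. The resolution is precisely the zoom-in step above: conditioning on a sufficiently deep cylinder around~$\alpha$ (captured by the advice~$n_0$) forces the competing mass at every subsequent level below~$s'$, which restores the strict gap needed to decide the correct child purely lower-semicomputably.
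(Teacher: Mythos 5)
Your argument is correct. It begins exactly as the paper's proof does: by countable (sub)additivity you fix a single functional $\Phi_e$ such that $Q(A_e)=s>0$, and you then work with the quantities $Q(V_\sigma)$, the $Q$-masses of the oracle sets producing each output prefix $\sigma$ -- these are precisely the data behind the paper's tree of ``strings computed from a set of oracles of measure at least $c$''. Where the two proofs part ways is in how computability of $\alpha$ is extracted. The paper observes that the strings $\sigma$ with $Q(V_\sigma)\ge c$ form a tree of width at most $1/c$, hence with at most $1/c$ branches, and then simply asserts that each such branch is computable; that last step is the real content and is left to the reader. You instead exploit the lower semicomputability of $\sigma\mapsto Q(V_\sigma)$ directly: since $Q(V_{\alpha\uh n})$ decreases to $s$ (continuity from above along $\bigcap_n V_{\alpha\uh n}=A_e$), zooming in to a level $n_0$ with $Q(V_{\alpha\uh n_0})<2s'$ for a rational $s'\in(s/2,s)$ forces, at every deeper level, the correct child to have mass $\ge s>s'$ and the wrong child to have mass $\le 2s'-s<s'$, so waiting for one of the two lower approximations to exceed $s'$ decides each successive bit, with $(s',n_0,\alpha\uh n_0)$ as finite advice. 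This yields a complete, explicit algorithm and in effect proves the more general fact that any sequence along which a lower-semicomputable semimeasure stays bounded away from zero is computable; the paper's branch-counting argument is shorter but relies on an unproved tree fact. One cosmetic point: because $\Phi_e$ may be partial on a positive-measure set, $\nu([\sigma])=Q(V_\sigma)$ is in general only a semimeasure rather than the pushforward measure, but your argument uses nothing beyond $\nu([\sigma 0])+\nu([\sigma 1])\le\nu([\sigma])$, $\nu([\alpha\uh n])\ge s$, and $\lim_n\nu([\alpha\uh n])=s$, all of which hold, so this is harmless.
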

\begin{proof}
  Because there exist only countably many machines, 
  there exists a unique machine that computes $\alpha$ from a set of sequences with
  positive $Q$-measure. Let $c>0$ be a lower bound for this $Q$-measure.
  We can enumerate a binary tree containing all strings $x$ that can be computed on this machine 
  from a set of oracles that has $Q$-measure at least $c$.
  This tree contains at most $1/c$ branches and each such branch is computable. 
\end{proof}

\noindent
We first repeat the corollary.
\newcounter{oldcounter}
\setcounter{oldcounter}{\value{theorem}}
\setcounter{theorem}{\value{cor}}
\addtocounter{theorem}{-1}
\begin{corollary}
  There exists a computable measure $P$ on $2^{\mathbb{N}} \times 2^{\mathbb{N}}$ such that 
  the set of $\beta$ for which $P_C(\cdot|\beta)$ is not computable relative to~$\beta$, has nonzero $P_M$-measure.
\end{corollary}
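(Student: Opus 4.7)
The plan is to re-use the measure $P$ constructed in the proof of Theorem~\ref{th:VanLambalgenFails}. The key fact extracted there is that for \emph{every} $\beta \in 2^{\mathbb N}$ containing infinitely many ones, $P_C(\cdot|\beta)$ exists and equals the uniform probability measure on $[\alpha,1]$. Hence the conditional measure depends on $\beta$ only through $\alpha$, and any way of computing it, even approximately, gives a way of computing $\alpha$.

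First I check that the set $B$ of $\beta \in 2^{\mathbb N}$ with infinitely many ones has positive $P_M$-measure, and that $P_M$ is computable. Inspecting the definition of $P$, the marginal $P_M$ decomposes into a discrete part supported on eventually-zero sequences $w 0^\infty$ (carrying the masses from the strips $[\alpha_n,\alpha_{n+1}] \times 2^{\mathbb N}$) and a continuous part equal to $(1-\alpha)\mu$ inherited from the uniform strip $[\alpha,1] \times 2^{\mathbb N}$. Since every $\beta \in B$ is not eventually zero, $P_M(B) = (1-\alpha)\mu(B) = 1 - \alpha > 0$. Computability of $P_M$ is immediate from computability of $P$.

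Next I note that for every $\beta \in B$, if $P_C(\cdot|\beta)$ is computable relative to $\beta$, then $\alpha$ is computable relative to~$\beta$. Indeed, $P_C(\cdot|\beta)$ is the uniform measure on $[\alpha,1]$, and $\alpha$ can be recovered from it, for instance as the infimum of its support, or effectively from the integral identity $\int (1-x)\,dP_C(x|\beta) = (1-\alpha)/2$, which is a computable quantity once the measure is.

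Finally, suppose for contradiction that the set $N$ of $\beta$ for which $P_C(\cdot|\beta)$ is not computable relative to $\beta$ has $P_M$-measure zero. Then $B \setminus N$ has $P_M$-measure $1 - \alpha > 0$, and by the previous paragraph each $\beta \in B \setminus N$ computes $\alpha$. Applying Lemma~\ref{lem:probComp} with $Q = P_M$ forces $\alpha$ to be computable, contradicting the Martin-L\"of randomness of~$\alpha$. The only non-routine verification is the decomposition of $P_M$ into its atomic and continuous parts sufficient to conclude $P_M(B) > 0$; everything else is a direct application of the theorem's proof together with Lemma~\ref{lem:probComp}.
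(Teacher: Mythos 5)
Your argument follows the paper's own proof almost step for step: the same measure $P$ from Theorem~\ref{th:VanLambalgenFails}, the same decomposition of $P_M$ into an atomic part of mass $\alpha$ (on the eventually-zero sequences) and a continuous part $(1-\alpha)\mu$, the same identification of the conditional measure as the uniform measure on $[\alpha,1]$, and the same appeal to Lemma~\ref{lem:probComp} to rule out a positive-$P_M$-measure set of $\beta$ computing $\alpha$. The one place you overreach is the ``key fact'' as you state it: it is not true (and not what the proof of Theorem~\ref{th:VanLambalgenFails} establishes) that $P_C(\cdot|\beta)$ \emph{exists} for every $\beta$ with infinitely many ones. The theorem's proof only shows that along prefixes $b$ ending in a one the ratio $P(\cdot\times[b])/P_M([b])$ is uniform on $[\alpha_{|b|},1]$; existence of the limit over \emph{all} prefixes is imported from Theorem~\ref{th:condDefined}, i.e.\ from $P_M$-randomness of $\beta$, not from the presence of infinitely many ones. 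Indeed, if after a one at position $m$ the sequence $\beta$ continues with a run of $k$ zeros where $2^{k}\gg 1/(\alpha_{m+1}-\alpha_m)$, then at that prefix the conditional ratio is almost entirely carried by the atoms below $\alpha$, so for suitably chosen $\beta$ with infinitely many ones the limit oscillates and $P_C(\cdot|\beta)$ is undefined.

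This misstatement does not sink your proof, because you only ever use the fact for $\beta\in B\setminus N$, where $P_C(\cdot|\beta)$ is assumed computable relative to $\beta$ and hence in particular defined; and once the limit exists, the subsequence of prefixes ending in a one forces it to equal the uniform measure on $[\alpha,1]$. Either make that one-sentence argument explicit, or do as the paper does and take $B$ to be the set of $P_M$-random non-dyadic $\beta$, for which Theorem~\ref{th:condDefined} guarantees existence outright. With that repair the rest stands: your recovery of $\alpha$ via the computable functional $\int(1-x)\,\mathrm{d}P_C(x|\beta)=(1-\alpha)/2$ is a perfectly good effective substitute for the paper's remark that $x\mapsto R([x])$ computes $\alpha$, and the concluding application of Lemma~\ref{lem:probComp} with $Q=P_M$ is exactly the paper's.
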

\setcounter{theorem}{\value{oldcounter}}
\begin{proof}
 Let $\alpha$ and $P$ be as constructed above. 
 Recall that $\alpha$ is random and therefore not computable. $P_M$ is computable. 
 The binary rational sequences have $P_M$-measure~$\alpha<1$, because
 $P$ concentrates all measure at the left of $\alpha$ on the binary rational sequences, 
 and at the right of $\alpha$, these sequences have $P_M$-measure zero.
 Let $B$ be the set of $P_M$-random sequences $\beta$ that are not binary rational. This set has 
 $P_M$-measure $1-\alpha>0$. 
 It remains to show that for $P_M$-almost all 
 $\beta \in B$, the measure $P_C(\cdot|\beta)$ can not be computed from~$\beta$.
  
 For each $\beta \in B$, the measure $P_C(\cdot|\beta)$ 
 equals the uniform measure with support $[\alpha,1]$.
 Let $R$ be this measure.
 The function $x \mapsto R([x])$ computes~$\alpha$, hence $R$ is not computable. 
 Lemma~\ref{lem:probComp} implies that 
 the set of $\beta$ that compute $R$ has $P_M$-measure zero. Hence, at most 
 a $P_M$-measure zero of $\beta \in B$ do not satisfy the conditions of the corollary. 
\end{proof}
Unfortunately, for any $[x] \subseteq [0,\alpha]$, the function $P_C([x]|\cdot)$ is nowhere continuous. 
It is only continuous in the set of points that are not binary rational, and this set 
is not negligible (it has $P_M$-measure~$\alpha$). 
Therefore, we present another example of such a measure for which the conditional measure 
is continuous, even for all~$\beta$.
\begin{theorem}\label{th:continuousNoncomputable}
  There exists a computable measure $P$ on $\mathbb{N} \times 2^\mathbb{N}$ such that:
  \begin{itemize}
    \item 
      for each $S \subseteq \mathbb{N}$, the function $P_C(S|\cdot)$ is defined and continuous on $2^{\mathbb N}$,
    \item 
      the set of $\beta$ for which 
      $P_C(\cdot|\beta)$ is not computable relative to $\beta$, has $P_M$-measure one.
  \end{itemize}
\end{theorem}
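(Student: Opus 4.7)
The plan is to construct $P$ as a mixture $P = \sum_{n \in \mathbb{N}} \delta_n \otimes \nu_n$, where $(\nu_n)$ is a uniformly computable family of subprobability measures on $2^{\mathbb N}$ summing to a fixed computable probability measure $P_M$ on $2^{\mathbb N}$ (for concreteness, the uniform measure~$\mu$). This ensures that $P(\{n\} \times [w]) = \nu_n([w])$ is computable in $(n,w)$, so $P$ is a computable measure. Each $\nu_n$ will be absolutely continuous with respect to $P_M$ with a density $f_n : 2^{\mathbb N} \to [0, 1]$ arranged to be continuous everywhere and to satisfy $\sum_n f_n \equiv 1$. Under these conditions the average $\nu_n([w])/\mu([w])$ converges to $f_n(\beta)$ along every cylinder sequence shrinking to~$\beta$, so $P_C(\{n\}|\beta) = f_n(\beta)$ is defined and continuous at every~$\beta$; and for any $S \subseteq \mathbb{N}$, $P_C(S|\cdot) = \sum_{n\in S} f_n$ is both lower semicontinuous (as a pointwise supremum of continuous partial sums) and, via the complement, upper semicontinuous, hence continuous. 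This would settle the first bullet.

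For the second bullet, the densities $f_n$ have to be chosen so that the sequence $(f_n(\beta))_n$ is not $\beta$-computable for $P_M$-almost every~$\beta$. I reuse the Martin-L\"of random real $\alpha$ with computable rational approximations $\alpha_k \nearrow \alpha$ from the proof of Theorem~\ref{th:VanLambalgenFails}. Following the idea of~\cite{noncomputableConditionalMeasure}, the plan is to design the $f_n$ so that some canonical function of the sequence $(f_n(\beta))_n$---for instance its limit as $n \to \infty$---equals the non-computable~$\alpha$, while each cylinder integral $\nu_n([w])$ depends only on the computable approximation $\alpha_k$ with $k$ the depth of~$[w]$. Any algorithm that computes $P_C(\cdot|\beta)$ from~$\beta$ could then also compute~$\alpha$; by Lemma~\ref{lem:probComp} the set of~$\beta$ capable of this has $P_M$-measure zero, giving the second bullet.

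The main obstacle is engineering the continuous densities $f_n$ so as to reconcile computable cylinder integrals $\nu_n([w])$ with pointwise values $f_n(\beta)$ that, viewed as a sequence in~$n$, are not uniformly $\beta$-computable. A constant density $f_n \equiv \alpha$ immediately fails because $\nu_n(2^{\mathbb N}) = \alpha$ is not computable; more generally, any density taking the value~$\alpha$ on a set of positive $\mu$-measure yields a non-computable cylinder integral on any cylinder hitting that set. The construction must therefore let $f_n$ vary \emph{within} each cylinder so that the non-computable content of~$\alpha$ ``averages out'' (leaving only~$\alpha_{|w|}$ in the formula for $\nu_n([w])$) while accumulating in the limit along an infinite~$\beta$. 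I expect this balancing act---continuity of each~$f_n$, computability of the cylinder averages uniformly in~$(n,w)$, and non-uniform (in~$n$) pointwise non-computability---to be the main technical work; once it is accomplished, the appeal to Lemma~\ref{lem:probComp} is exactly as in Corollary~\ref{cor:noncomputableConditional}.
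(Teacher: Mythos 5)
Your framework (a mixture $P=\sum_n \delta_n\otimes\nu_n$ with everywhere-continuous densities, so that $P_C(\{n\}|\beta)$ is the density value at $\beta$, plus the appeal to Lemma~\ref{lem:probComp}) is exactly the high-level plan of the paper's proof, and the first bullet would indeed follow from it. But the proposal stops precisely where the theorem lives: you state that "engineering the continuous densities $f_n$" so that the cylinder integrals are computable while the pointwise values encode non-computable information "is the main technical work", and you do not carry it out. That construction is the content of the theorem, so as it stands there is a genuine gap, not just a missing routine verification. Moreover, the one concrete suggestion you make is inconsistent with your own normalization: if $\sum_n f_n\equiv 1$ with $f_n\ge 0$, then $f_n(\beta)\to 0$ for every $\beta$, so the limit of $(f_n(\beta))_n$ cannot equal $\alpha$; some rescaled or otherwise coded quantity would have to be used, and you would still have to show that it can be \emph{computably decoded} from arbitrary-precision approximations to the numbers $P_C(\{n\}|\beta)$. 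This decoding robustness is a second, independent difficulty that your sketch does not address: knowing that some functional of the sequence equals $\alpha$ is not enough if values for indices carrying information can be arbitrarily close to the "uninformative" values.

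For comparison, the paper resolves both difficulties as follows. It encodes a c.e.\ non-computable set $A$ rather than the real $\alpha$: for $n\in A$ with enumeration time $t_n$, the density for index $n$ deviates from uniform only at scales finer than $2^{-t_n}$ (it is a tent function repeated with period $2^{-t_n}$, of average $1$), so $P(\{n\}\times[x])$ is computable: for $|x|\le t_n$ the integral is exactly the uniform value, and for $|x|>t_n$ one can detect $n\in A$ by running the enumeration for $|x|$ steps. This is the precise mechanism behind your phrase "the non-computable content averages out on cylinders but accumulates in the limit", and you would need something equivalent. For decodability, the paper pairs indices $2n$ and $2n+1$ with two shifted tent functions $f_0,f_1$ chosen so that at every point at least one of them takes the value $0$ or $2$; comparing $\hat P(2n|\beta)$ and $\hat P(2n+1|\beta)$ (suitably rescaled against a reference index $2m$ with $m\notin A$) then decides $n\in A$ from any $\beta$ together with an oracle for $\hat P(\cdot|\beta)$, for \emph{every} $\beta$. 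Only after such a construction does your final step (Lemma~\ref{lem:probComp} applied as in Corollary~\ref{cor:noncomputableConditional}) give measure one for the second bullet. Without a concrete family $(f_n)$ and a decoding argument of this kind, the proposal is a correct outline of the strategy but not a proof.
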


\begin{proof}
 Let $A$ be a computably enumerable set that is not computable (for example the Halting problem). 
 Fix an algorithm that enumerates the elements of $A$, and for each $n \in A$ let $t_n$ 
 be the time at which this algorithm enumerates~$n$.
 The idea of the construction of $P$ is the same as in~\cite{noncomputableConditionalMeasure}:
 if $n \not\in A$, then the measure $P(\{n\} \times \cdot)$ is uniformly distributed over $2^{\mathbb N}$.
 Otherwise, the measure is non-uniform, but only at a very small scale, i.e., for $|x| \le t_n$, 
 the values of $P(\{n\} \times [x])$ do not depend on whether $x \in A$ or not, and only for 
 $|x| > t_n$ the values are different.
 In this way, we guarantee that $P$ is computable:
 if $|x| > t_n$, a program that computes $P(\{n\} \times [x])$ on input $(n,x)$ 
 can discover whether $n \in A$ and compute the different value. 
 Because the conditional measure is defined in the limit, 
 $P_C(\cdot|\beta)$ depends on this small scale structure, 
 and therefore, the conditional measure can encode non-computable information.
 
 \smallskip
 To define $P$, we use the functions $f_0$ and $f_1$ which are defined graphically in the
 figure below. More precisely, $f_0$ is the unique piecewise linear function whose graph contains the points 
 $(0,2)$, $(1/4,0)$, $(2/4,0)$, $(3/4,2)$ and $(1,2)$. $f_1$ is defined by the relation 
 $f_1(r) = f_0( (r - 1/4) \bmod 1)$ for $r \in [0,1]$.
 Note that the average of $f_i$ over $2^\mathbb{N}$ is $1$ for $i = 0,1$. 
 For $\beta \in 2^{\mathbb N}$, let $\beta_{^{\,t}}$ be the $t$th bit of $\beta$. Note that
 \[
 \beta \mapsto f_i\left(\beta_{^{\,t+1}}\beta_{^{\,t+2}}\dots\right)
 \]
 is the function obtained by repeating $f_i$ with period $2^{-t}$.
 These functions are all continuous and have average $1$.
 \begin{figure}[h]
   \centering
 \begin{tikzpicture}
   \draw[thin, gray, dashed] (0,0) grid (4,2);
   \draw (0,2.5) -- (0,0) --  (4,0) -- (4,2.5);
   \node[anchor=north] at (2,-.2) {$2^\mathbb{N}$};
   \node[anchor=east] at (0,2) {$2$};
   \node[anchor=east] at (0,1) {$1$};

   \draw[dashed,thick] (0,2) -- (1,2) node[anchor=south] {$f_1$} -- (2,0) -- (3,0) -- (4,2);
   \draw[thick] (0,2) -- (1,0) -- (2,0) -- (3,2) node[anchor=south] {$f_0$} -- (4,2);
 \end{tikzpicture}
 \quad
 \newcommand{\drawMotive}[3]{
   \fill[path fading=east] #1 rectangle +($(#2/4,#3)$);
   \fill[white]            ($#1+(#2/4,0)$) rectangle +($(#2/4,#3)$);
   \fill[path fading=west] ($#1+(2*#2/4,0)$) rectangle +($(#2/4,#3)$);
   \draw                   ($#1+(3*#2/4,0)$) -- +(0,#3);
   \fill                   ($#1+(3*#2/4,0)$) rectangle +($(#2/4,#3)$);
   \draw                   ($#1+(#2,0)$) -- +(0,#3);
 }
 \begin{tikzpicture}[yscale=0.8]
   \draw (0,0) rectangle (4,4);
   \node[anchor=north] at (2,-.2) {$2^\mathbb{N}$};
   \node[anchor=east] at (0,3) {$n=1$};
   \node[anchor=east] at (0,1.5) {$n=2$};
   \node[anchor=east] at (0,0.75) {$n=3$};
   \node[anchor=east] at (0,0.375) {$n=4$};

   \fill[black!30] (0,2) rectangle (4,4);

   \foreach \x in {0,.25,...,3.75}
     {\drawMotive{(\x,1)}{.25}{1}}

   \fill[black!30] (0,.5) rectangle (4,1);

   \foreach \x in {0,.1,...,4}
     {\drawMotive{(\x,.25)}{.1}{.25}}

   \foreach \x in {0,.5,...,3.5}
     {\drawMotive{(\x,0.125)}{.5}{0.125}}

   \fill[black!30] (0,.06) rectangle (4,.125);
 \end{tikzpicture}
 \caption{Left: measures $f_0$ and $f_1$ over $2^{\mathbb N}$. Right: measure $P$ over $\mathbb N \times 2^{\mathbb N}$.}
 \label{fig:densities}
 \end{figure}
  
 \noindent
 Let us first define $P$ using the following density, see figure~\ref{fig:densities}:
  \[
   f(n, \beta) = 
    \begin{cases}
      2^{-n} f_0(\beta_{^{\,t_n+1}}\beta_{^{\,t_n+2}}\dots)  & \text{if $n \in A$}\\
      2^{-n} & \text{otherwise.}
   \end{cases}
  \]
  Thus, $P(\{n\} \times [x]) = \int_{[x]} f(n,\beta) \text{d}\beta$.

  Let $P(n|\beta)$ be short for $P_C(\{n\}|\beta)$. We now show that this function is continuous in $\beta$.
  The marginal density $f_M = \sum_{i \in \mathbb{N}} f(i, \cdot)$ is continuous, 
  because it is a uniformly convergent sum of continuous functions. 
  Also, $f_M$ is bounded from below by a positive constant 
  (if $m \not\in A$, then $f_M \ge 2^{-m}$).  
  Hence, the conditional measure is continuous on singleton sets:
  \[
    P(n|\beta) = \frac{f(n,\beta)}{\sum_{i \in \mathbb N}f(i,\beta)}.
  \]
  For $S \subseteq \mathbb N$, $P(S|\beta)$ is a uniformly convergent sum of continuous functions, 
  and hence also continuous.

  \medskip
  By Lemma~\ref{lem:probComp}, it remains to show for each $\beta$ that $P(\cdot|\beta)$ computes $A$,
  (i.e., $A$ is computed by a machine that has oracle access to approximations of $P(n|\beta)$ of any precision).
  For each fixed $\beta$, the values of $P(n|\beta)$ for all $n \not\in A$ are the same.
  Unfortunately, there can be many $n \in A$ for which $P(n|\beta)$ is close to this value.
  Hence, $A$ might not be computable from $P(\cdot|\beta)$.

  \smallskip
  We now adapt the construction. The new measure $\hat P$ encodes 
  membership of $n$ in $A$ using two values of the conditional measure:
  $\hat P(2n|\beta)$ and $\hat P(2n+1|\beta)$.  Note
  that for each $\beta \in 2^{\mathbb{N}}$ at least one of the values
  $f_0(\beta), f_1(\beta)$ 
  is either $0$ or $2$.  Hence, for $b \in \{0,1\}$, we define $\hat P$ using
  \[
   f(2n + b, \beta) = 
    \begin{cases}
      2^{-2n-b} f_b(\beta_{^{t_n+1}}\beta_{^{t_n+2}}\dots)  & \text{if $n \in A$,}\\
      2^{-2n-b} & \text{otherwise.}
   \end{cases}
  \]
  For the same reasons as before, $\hat P(n|\cdot)$ is continuous.
  Fix an $m \not\in A$ and note that $\hat P(2m|\beta)>0$.
  If $n \in A$ at least one of the values 
  \[
  \frac{\hat P(2n|\beta)2^{2n}}{\hat P(2m|\beta)2^{2m}}, \frac{\hat P(2n+1|\beta)2^{2n+1}}{\hat P(2m|\beta)2^{2m}}
  \]
  equals $0$ or $2$; otherwise, both values equal $1$.
  Hence, for each $\beta$ the measure $\hat P(\cdot|\beta)$ computes $A$. 
  (Because $\inf_\beta \hat P(2m|\beta) > 0$, the reduction can even be made uniformly 
  in $\hat P(\cdot|\beta)$ for all~$\beta$.)
\end{proof}

\section*{Appendix: Two definitions of conditional measure coincide}

In probability theory, conditional measures are defined implicitly 
using the Radon-Nikodym theorem. Any measure that satisfies the conditions 
of this theorem can be used as a conditional measure. The following lemma 
states that such measures are almost everywhere equal to the conditional 
measure $P_C$ defined above.

\begin{lemma}[Folklore]\label{lem:RadonNikodym}
  Let $2^*$ be the set of strings. 
  For every measure $P$ on $2^{\mathbb N} \times 2^{\mathbb N}$ 
  and for every function $f: 2^* \times 2^{\mathbb N}$ such that for all $x$ and $y$
  \[
  P([x],[y]) = \int_{[y]} f(x,\beta) P_M(\text{d}\beta),
  \]
  we have that $f(\cdot,\beta) = P_C([\cdot]|\beta)$ 
  for all $\beta$ in a set of $P_M$-measure one.
\end{lemma}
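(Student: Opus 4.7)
The plan is to reduce the claim to a standard martingale convergence (Lebesgue differentiation) statement on the dyadic filtration of Cantor space, applied separately to each string $x$.

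First, fix a string $x$ and define the measure $\nu_x(B) = P([x]\times B)$ on $2^{\mathbb N}$. Since $\nu_x(B) \le P(2^{\mathbb N}\times B) = P_M(B)$, the measure $\nu_x$ is absolutely continuous with respect to $P_M$. The hypothesis tells us that $f(x,\cdot)$ satisfies $\nu_x([y]) = \int_{[y]} f(x,\beta)\, P_M(d\beta)$ for every string $y$. Since the cylinder sets form a $\pi$-system generating the Borel $\sigma$-algebra of $2^{\mathbb N}$, a standard monotone class (or $\pi$-$\lambda$) argument upgrades this equality to all Borel sets $B$, so $f(x,\cdot)$ is a Radon--Nikodym density of $\nu_x$ with respect to $P_M$.

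Next I would invoke the martingale (Lévy upward) convergence theorem on the filtration $\mathcal{F}_n$ generated by the basic cylinders $[\beta_1\dots\beta_n]$. For fixed $x$, the process
\[
M_n(\beta) = \frac{\nu_x([\beta_1\dots\beta_n])}{P_M([\beta_1\dots\beta_n])}
\]
(set to $0$ where the denominator vanishes, which happens only on a $P_M$-null set) is precisely the conditional expectation $\mathbb{E}[f(x,\cdot)\mid \mathcal F_n]$ with respect to $P_M$. Since $\bigvee_n \mathcal F_n$ is the full Borel $\sigma$-algebra, $M_n \to f(x,\cdot)$ both $P_M$-almost surely and in $L^1(P_M)$. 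But the pointwise limit of $M_n(\beta)$ is exactly $P_C([x]\mid \beta)$ by the defining formula. Hence $f(x,\beta) = P_C([x]\mid\beta)$ on a set $E_x\subseteq 2^{\mathbb N}$ of full $P_M$-measure.

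Finally, taking the countable intersection $E = \bigcap_{x\in 2^*} E_x$ over all finite binary strings $x$ yields a set of $P_M$-measure one on which $f(x,\beta) = P_C([x]\mid\beta)$ simultaneously for every $x$, as required. The only non-routine step is the martingale convergence / Lebesgue differentiation invocation; this is classical, but is where all of the analytic content of the lemma sits. The rest of the argument is bookkeeping: absolute continuity, uniqueness of the Radon--Nikodym density on the generating $\pi$-system, and the countability of $2^*$ to combine the null exceptional sets.
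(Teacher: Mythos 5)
Your proposal is correct and follows essentially the same route as the paper: fix $x$, obtain $P_M$-almost-everywhere convergence of the cylinder ratios $P([x]\times[\beta_1\dots\beta_n])/P_M([\beta_1\dots\beta_n])$ to $f(x,\beta)$, and then intersect the full-measure sets over the countably many strings $x$. The only difference is one of phrasing: you invoke L\'evy's upward martingale convergence on the cylinder filtration, while the paper cites the Lebesgue differentiation theorem for Cantor space, and on this filtration the two statements coincide.
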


In the proof we use the Lebesgue differentiation theorem for Cantor space. 
The proof of this version follows from the proof for Real numbers.
\begin{theorem}[Lebesgue differentiation theorem for Cantor space]\label{lem:LebesgueDifferentiationTheorem}
  Let $Q$ be a measure on $2^{\mathbb N}$. For every $Q$-integrable function $g:2^{\mathbb N} \rar \mathbb R$ we have that
\[
\lim_n \frac{\int_{[\beta_1,\dots,\beta_n]} g(\gamma) Q(\text{d}\gamma)}{Q([\beta_1,\dots,\beta_n])} = g(\beta)
   \]
 for $Q$-almost all $\beta$.
\end{theorem}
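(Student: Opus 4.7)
The plan is to recognize the ratio as a conditional expectation along a natural filtration and invoke Doob's martingale convergence theorem.

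First, I would let $\mathcal{F}_n$ denote the finite sigma-algebra generated by the cylinders $[x]$ with $|x| = n$, so that $(\mathcal{F}_n)_{n \in \mathbb{N}}$ is an increasing filtration whose union generates the Borel sigma-algebra on $2^{\mathbb{N}}$. For a $Q$-integrable $g$, the function
\[
M_n(\beta) \;=\; \frac{\int_{[\beta_1\dots\beta_n]} g(\gamma)\, Q(\text{d}\gamma)}{Q([\beta_1\dots\beta_n])}
\]
is constant on each atom $[x]$ of $\mathcal{F}_n$ of positive $Q$-measure (where it equals the $Q$-average of $g$ on that atom), and is undefined only on the $Q$-null set $\{\beta : Q([\beta_1\dots\beta_n]) = 0 \text{ for some } n\}$. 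Hence $M_n$ is a version of the conditional expectation $E_Q[g \mid \mathcal{F}_n]$.

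Next, by L\'evy's upward theorem (Doob's martingale convergence theorem applied to the uniformly integrable martingale $(M_n)$), the sequence $M_n$ converges $Q$-almost surely and in $L^1(Q)$ to $E_Q[g \mid \mathcal{F}_\infty]$, where $\mathcal{F}_\infty = \sigma(\bigcup_n \mathcal{F}_n)$ coincides with the full Borel sigma-algebra. Since $g$ is itself $\mathcal{F}_\infty$-measurable, $E_Q[g \mid \mathcal{F}_\infty] = g$ $Q$-a.s., which is exactly the statement.

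If one prefers to avoid abstract martingale theory, I would give the classical two-step argument. The conclusion is trivial on the dense class of finite linear combinations of cylinder indicators, because once $n$ exceeds the largest cylinder length the ratio is \emph{exactly} $g(\beta)$. To extend to $g \in L^1(Q)$, I would prove a Hardy--Littlewood type maximal inequality
\[
Q\bigl\{\beta : \sup_n |M_n(\beta)| > \lambda \bigr\} \;\le\; \|g\|_{1} / \lambda,
\]
whose proof is especially clean on Cantor space: for each $\beta$ above the threshold, pick the smallest witnessing $n$; the corresponding cylinders are pairwise disjoint by the tree structure, so summing $Q$-measures gives the bound with no Vitali covering needed. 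A standard density/approximation argument then upgrades convergence from the dense subclass to all $g \in L^1(Q)$. The main (and essentially only) obstacle is the routine handling of the null set on which the denominator vanishes, and both approaches dispose of it trivially.
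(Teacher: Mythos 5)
Your proposal is correct, and in fact it supplies more than the paper does: the paper offers no proof of this theorem at all, remarking only that the Cantor-space version ``follows from the proof for Real numbers.'' Your first argument --- identifying $M_n$ as a version of $E_Q[g\mid\mathcal{F}_n]$ for the cylinder filtration and invoking L\'evy's upward convergence theorem, with $\mathcal{F}_\infty$ equal to the Borel $\sigma$-algebra so that the limit is $g$ itself --- is a genuinely different and self-contained route, and it is the most natural one for Cantor space since no covering lemma or transfer from $\mathbb{R}$ is needed; the only point to make explicit is that $Q$ is a finite measure, so the $Q$-null sets where some denominator $Q([\beta_1\dots\beta_n])$ vanishes can be discarded and the closed martingale is automatically uniformly integrable. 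Your second argument is essentially the Cantor-space analogue of the classical proof the paper gestures at, and your observation that the Vitali covering step becomes trivial is sound: taking for each point the \emph{minimal} witnessing cylinder yields an antichain (if one selected cylinder strictly contained another, the point that produced the longer one would already exceed the threshold at the shorter level, contradicting minimality), so the selected cylinders are pairwise disjoint and the weak-type bound $Q\{\sup_n|M_n|>\lambda\}\le \|g\|_1/\lambda$ follows by summing; combined with exactness of the statement on the dense class of finite linear combinations of cylinder indicators, the usual approximation argument finishes the proof. Either of your two arguments would serve as a complete replacement for the paper's missing proof; the martingale version is shorter, while the maximal-inequality version makes transparent why the real-line proof adapts.
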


\begin{proof}[of  Lemma~\ref{lem:RadonNikodym}]
 For a fixed $x$, apply the Lebesgue differentiation theorem with $g(\cdot) = f(x,\cdot)$ and $Q = P_M$. 
 By assumption on $f$, the nominator simplifies to $P([x],[\beta_1\dots \beta_n])$.
 It follows that $f(x,\beta)$ differs from $P_C([x]|\beta)$ in at most a set of $\beta$ with $P_M$-measure zero.
 Because there are countably many strings $x$, it follows that $f(\cdot,\beta)$ and $P_C([\cdot]|\beta)$ 
 differ in at most a set of $P_M$-measure zero.
\end{proof}

\bibliography{kolmogorov}   

\end{document}